\documentclass{amsart}
\usepackage{amssymb}
\newtheorem{theorem}{Theorem}
\newtheorem{lemma}[theorem]{Lemma}

\newtheorem{corollary}[theorem]{Corollary}

\theoremstyle{definition}

\theoremstyle{remark}

\newcommand{\diam}{\mathrm{diam}}

\newcommand{\conv}{\mathrm{conv}}

\newcommand{\R}{\mathbb{R}}
\newcommand{\N}{\mathbb{N}}

\newcommand{\X}{\mathrm{X}}

\newcommand{\Y}{\mathrm{Y}}
\newcommand{\Z}{\mathrm{Z}}
\newcommand{\B}{\mathbf{B}}

\renewcommand{\S}{\mathbf{S}}



\begin{document}

\title[Superreflexive almost transitive Banach spaces]{A note on the class of superreflexive\\ almost transitive Banach spaces}

\author{Jarno Talponen}
\address{University of Helsinki, Department of Mathematics and Statistics, Box 68, (Gustaf H\"{a}llstr\"{o}minkatu 2b) FI-00014 University
of Helsinki, Finland}
\email{talponen@cc.helsinki.fi}

\subjclass{Primary 46B04; Secondary 46B20}
\date{\today}

\begin{abstract}
The class $\mathcal{J}$ of simultaneously almost transitive, uniformly convex and
uniformly smooth Banach spaces is characterized in terms of convex-transitivity and the weak geometry of the norm.
\end{abstract}

\maketitle

\section*{Introduction}
This note investigates the interplay between the geometry of the norm and isometric symmetries of Banach spaces.
We denote the closed unit ball of a real Banach space $\X$ by $\B_{\X}$ and the unit sphere of $\X$ by $\S_{\X}$.
A Banach space $\X$ is called \emph{transitive} if for each $x\in \S_{\X}$ the corresponding orbit
$\mathcal{G}_{\X}(x)\stackrel{\cdot}{=}\{T(x)|\ T\colon \X\rightarrow \X\ \mathrm{is\ an\ isometric\ automorphism}\}$
coincides with $\S_{\X}$. If $\overline{\mathcal{G}_{\X}(x)}=\S_{\X}$ (respectively $\overline{\conv}(\mathcal{G}_{\X}(x))=\B_{\X}$)
for all $x\in\S_{\X}$, then $\X$ is called \emph{almost transitive} (respectively \emph{convex-transitive}).
We refer to \cite{BR2} for an extensive survey of these and other related concepts.

We will study the class of simultaneously almost transitive, uniformly convex and uniformly smooth Banach spaces.
This class has been studied previously by Finet \cite{Fi}, Cabello \cite{Ca}, Becerra and Rodriguez \cite{BR}
and we denote it by $\mathcal{J}$ following \cite{BR2}. An example of such a space is $L^{p}(0,1)$ for $1<p<\infty$.

We will provide a criterion for a convex-transitive Banach space $\X$ to belong to $\mathcal{J}$.
Based on \cite{Fi,Ca,BR} Becerra and Rodriguez summarized in their survey \cite{BR2} some connections between transitivity
conditions and the geometry of the norm. Especially \cite[Thm. 6.8, Cor. 6.9]{BR2} provide
a long list of conditions on $\X$ equivalent to $\X$ being a member of
$\mathcal{J}$. For example, a convex-transitive space $\X$ belongs to
$\mathcal{J}$ provided that it is an Asplund space or has the Radon-Nikodym Property. Observe that since $L^{\infty}(0,1)$
is convex-transitive (see \cite{Rol} or \cite{BR2}) and contains isometrically all separable Banach spaces,
the convex-transitivity condition per se does not guarantee any good geometric behavior.

In order to characterize the convex-transitive spaces, which belong to $\mathcal{J}$, we impose the following two geometric
conditions. Firstly, we require that there exist relatively weakly open subsets of $\S_{\X}$ of diameter less than $2$. 
Secondly, we impose the condition that the $\omega$-exposed points of $\B_{\X}$ are
weakly dense in $\S_{\X}$. See \cite{Su} for important results regarding $\omega$-exposed points.

\subsection*{Preliminaries}
Throughout this article we will consider \emph{real} Banach spaces denoted by $\X,\Y$ and $\Z$ unless otherwise stated.
For a general introduction to the geometry of the norm see the first chapter of \cite{JL}.
Given $f\in\X^{\ast}$ and $\alpha\in\R$ the corresponding open \emph{slice} of $C\subset\X$ is denoted by
$S(C,f,\alpha)=\{x\in C|f(x)>\alpha\}$. Put $\B(x,\epsilon)=x+\epsilon\B_{\X}$ for $x\in\X$ and $\epsilon>0$.
The dual $\X^{\ast}$ of $\X$ is said to be \emph{convex $\omega^{\ast}$-transitive} if
$\overline{\conv}^{\omega^{\ast}}(\{T^{\ast}(f):\ T\in \mathcal{G}_{\X}\})=\B_{\X^{\ast}}$ for $f\in \S_{\X^{\ast}}$.
For a Banach space $\X$ and $u\in \S_{\X}$, we recall that the \emph{modulus of roughness} at $u$ is given by
\begin{equation}
\eta(\X,u)=\inf_{\delta>0}\left\{\sup\left\{\frac{||u+h||+||u-h||-2}{||h||}:\ h\in\X,\ ||h||\leq \delta\right\}\right\}.
\end{equation}
Note that $0\leq \eta(\X,u)\leq 2$. The norm is Fr\'{e}chet differentiable at $u$ if and only if $\eta(\X,u)=0$,
see \cite[Lemma I.1.3]{DGZ}. The space $\X$ is \emph{extremely rough} if $\eta(\X,u)=2$ for $u\in\S_{\X}$.
We will also require the following results. Recall the geometric fact below, which can be found in \cite[I.1.11]{DGZ}.
\begin{lemma}\label{lemma1}
Let $\X$ be a Banach space and $x^{\ast}\in \S_{\X^{\ast}}$. Then
\[\eta(\X^{\ast},x^{\ast})=\inf\{\diam(S(B_{\X},x^{\ast},\alpha)):\ 0<\alpha<1\}.\]
\end{lemma}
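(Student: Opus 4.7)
The plan is to prove both $\eta := \eta(\X^{\ast}, x^{\ast}) \leq L$ and $L \leq \eta$, where $L$ denotes the right-hand side of the identity. Both directions exploit the duality between near-attainment of $\|x^{\ast} \pm h^{\ast}\|$ on the unit ball $B_{\X}$ and diameters of slices of $B_{\X}$ cut out by $x^{\ast}$, via a single Hahn--Banach selection; the technical care will be in tying auxiliary parameters to the primary ones.

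For $\eta \leq L$, I would fix $\alpha \in (0,1)$, a small $\delta > 0$ with $2\delta + \delta^{2} < \alpha$, and an arbitrary $h^{\ast} \in \X^{\ast}$ with $0 < \|h^{\ast}\| \leq \delta$. Pick $y, z \in B_{\X}$ with $(x^{\ast} + h^{\ast})(y) > \|x^{\ast} + h^{\ast}\| - \|h^{\ast}\|^{2}$ and $(x^{\ast} - h^{\ast})(z) > \|x^{\ast} - h^{\ast}\| - \|h^{\ast}\|^{2}$. From $\|x^{\ast}\| = 1$ both $y$ and $z$ must then lie in $S(B_{\X}, x^{\ast}, 1 - 2\delta - \delta^{2})$, which is contained in $S(B_{\X}, x^{\ast}, 1 - \alpha)$. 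Adding the two near-attainment inequalities and using $x^{\ast}(y), x^{\ast}(z) \leq 1$ yields
\[
\|x^{\ast} + h^{\ast}\| + \|x^{\ast} - h^{\ast}\| - 2 \leq h^{\ast}(y - z) + 2\|h^{\ast}\|^{2} \leq \|h^{\ast}\|\,\diam(S(B_{\X}, x^{\ast}, 1 - \alpha)) + 2\|h^{\ast}\|^{2}.
\]
Dividing by $\|h^{\ast}\|$, taking the supremum over $0 < \|h^{\ast}\| \leq \delta$, and sending $\delta \to 0^{+}$ gives $\eta \leq \diam(S(B_{\X}, x^{\ast}, 1 - \alpha))$; infimizing over $\alpha$ yields $\eta \leq L$.

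For $L \leq \eta$, I would fix $\beta \in (0,1)$ and $y, z \in S(B_{\X}, x^{\ast}, \beta)$. By Hahn--Banach pick $h^{\ast} \in \S_{\X^{\ast}}$ with $h^{\ast}(y - z) = \|y - z\|$. Setting $t = \sqrt{1 - \beta}$ and pairing $x^{\ast} + t h^{\ast}$ with $y$ and $x^{\ast} - t h^{\ast}$ with $z$ gives
\[
\|x^{\ast} + t h^{\ast}\| + \|x^{\ast} - t h^{\ast}\| - 2 \geq -2(1 - \beta) + t \|y - z\|.
\]
Dividing by $\|t h^{\ast}\| = t$ shows the corresponding roughness ratio is at least $\|y - z\| - 2\sqrt{1 - \beta}$, and it is bounded above by the supremum defining $\eta$ at the parameter $\delta = \sqrt{1 - \beta}$. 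Taking the supremum over $y, z$ in the slice and letting $\beta \to 1^{-}$ (so $\delta \to 0^{+}$) converts this into $\eta \geq L$.

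The main obstacle is the matching of scales: the auxiliary small parameter (the approximation error $\|h^{\ast}\|^{2}$ in the first direction, the scaling factor $t = \sqrt{1 - \beta}$ in the second) has to be tied to the primary small parameter so that the error terms vanish strictly faster than the leading one, ensuring that the inequalities survive the infimum in $\delta$ defining $\eta$. Once those scales are arranged, the remainder is just Hahn--Banach and the definition of a slice.
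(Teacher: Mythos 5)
Your argument is correct: both directions check out, including the delicate points (the error term $\|h^{\ast}\|^{2}$ vanishing faster than $\|h^{\ast}\|$ in the first direction, and the implicit use of the monotonicity of $\delta\mapsto\sup_{\|h\|\leq\delta}(\cdot)$ to identify the infimum with the limit as $\delta\to 0^{+}$ in the second). The paper itself gives no proof of this lemma --- it is quoted as a known fact from Deville--Godefroy--Zizler [DGZ, I.1.11] --- and your self-contained derivation is essentially the standard duality computation found there, so nothing further is needed.
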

The following characterization of the class $\mathcal{J}$ is crucial in our arguments and it is included in \cite[Theorem 1]{BR0}:
\emph{The space $\X$ is a member of $\mathcal{J}$ if and only if $\X^{\ast}$ is convex $\omega^{\ast}$-transitive
and the norm of $\X^{\ast}$ is not extremely rough.}

Recall the classical \emph{\u Smulyan lemma} (see e.g. \cite[Lemma 8.4]{FHHMPZ}),
which states that the below conditions (i)-(iii) are equivalent:
\begin{enumerate}
\item[(i)]{The norm $||\cdot||$ of a Banach space $\X$ is Gateaux differentiable at $x\in \S_{\X}$.}
\item[(ii)]{For all $(f_{n}),(g_{n})\subset \S_{\X^{\ast}}$ such that
$\lim_{n\rightarrow\infty}f_{n}(x)=\lim_{n\rightarrow\infty}g_{n}(x)=1$ it holds that $f_{n}-g_{n}\stackrel{\omega^{\ast}}{\longrightarrow}0$ as $n\rightarrow\infty$.}
\item[(iii)]{There is unique $f\in \S_{\X^{\ast}}$ such that $f(x)=1$, i.e. $x$ is a smooth point.}
\end{enumerate}

The $\omega$-exposed points are a class of nicely rotund points.
Recall that $x\in \S_{\X}$ is said to be \emph{$\omega$-exposed} (resp. \emph{strongly exposed}) if there is
$f\in\S_{\X^{\ast}},\ f(x)=1,$ such that whenever $(x_{n})\subset\B_{\X}$ is a sequence satisfying $\lim_{n\rightarrow \infty}f(x_{n})=1$
then $x=\omega-\lim_{n\rightarrow\infty}x_{n}$ (resp. $x=\lim_{n\rightarrow\infty}x_{n}$). In such a case $f$ above is called a
\emph{$\omega$-exposing functional} (resp. \emph{strongly exposing functional}) for $x$. Actually, a norm-attaining functional
$f\in\S_{\X^{\ast}}$ is a $\omega$-exposing functional for a (unique) point $x\in\S_{\X}$ if and only if
$f$ is a smooth point in $\X^{\ast}$ (see e.g. \cite{talponen}).

\section*{Results}

The following Theorem 2. gives a criterion for convex-transitive Banach spaces to be members of the class $\mathcal{J}$.
Recall that $\mathcal{J}$ is the class of almost transitive Banach spaces, which are simultaneously uniformly convex and uniformly
smooth.
\begin{theorem}\label{thm1}
Let $\X$ be a convex-transitive Banach space satisfying the following conditions:
\begin{enumerate}
\item[$(a)$]{The weakly exposed points are relatively weakly dense in $\S_{\X}$.}
\item[$(b)$]{$\inf\{\diam(U)|U\subset \S_{\X}\ \mathrm{relatively}\ \omega\mathrm{-open},\ U\neq\emptyset\}<2$.}
\end{enumerate}
Then $\X$ belongs in $\mathcal{J}$.
\end{theorem}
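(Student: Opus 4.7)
The plan is to invoke the Becerra-Rodr\'iguez characterization of $\mathcal{J}$ recalled just before the statement: it suffices to verify that $\X^{\ast}$ is convex $\omega^{\ast}$-transitive and that its norm is not extremely rough. I expect the main technical difficulty to lie in the second property, namely in converting the relatively weakly small set produced by $(b)$ into a genuinely small \emph{slice} of $\B_{\X}$, so that Lemma \ref{lemma1} applies.

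For convex $\omega^{\ast}$-transitivity of $\X^{\ast}$ I would argue by polarity, using only convex-transitivity of $\X$. Fix $f \in \S_{\X^{\ast}}$ and set $C = \overline{\conv}^{\omega^{\ast}}\{T^{\ast}(f) : T \in \mathcal{G}_{\X}\}$; since $-\mathrm{Id} \in \mathcal{G}_{\X}$ this set is absolutely convex, $\omega^{\ast}$-closed, and contained in $\B_{\X^{\ast}}$, and composition inside $\mathcal{G}_{\X}$ yields $T^{\ast}C \subset C$ for every $T \in \mathcal{G}_{\X}$. Assume toward contradiction that $C \neq \B_{\X^{\ast}}$. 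By the bipolar theorem the absolutely convex polar $C^{\circ}$ strictly contains $\B_{\X}$, so fix $x \in C^{\circ}$ with $\|x\| > 1$. The $T^{\ast}$-invariance of $C$ transfers to $T$-invariance of $C^{\circ}$, so $\mathcal{G}_{\X}(x) \subset C^{\circ}$; convex-transitivity applied to $x/\|x\|$ and scaling then give $\|x\|\B_{\X} = \overline{\conv}\,\mathcal{G}_{\X}(x) \subset C^{\circ} \subset \{y : |f(y)| \leq 1\}$, which forces $\|x\| \leq 1$ and yields the desired contradiction.

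For the second property I would combine $(a)$ and $(b)$. By $(b)$, choose a nonempty relatively $\omega$-open $U \subset \S_{\X}$ with $d := \diam(U) < 2$, and by $(a)$ pick a $\omega$-exposed point $x_{0} \in U$ with exposing functional $f_{0} \in \S_{\X^{\ast}}$. The main obstacle here is that any weakly open lift of $U$ from $\S_{\X}$ to $\X$ may be arbitrarily large on $\B_{\X}$, so one cannot directly intersect such a lift with a slice to obtain norm smallness. Instead I would project slices back onto $\S_{\X}$: for $x \in S(\B_{\X}, f_{0}, \alpha)$, setting $\hat{x} = x/\|x\|$, one has $\alpha < \|x\| \leq 1$ and hence $\|x - \hat{x}\| = 1 - \|x\| < 1 - \alpha$, while $f_{0}(\hat{x}) \geq f_{0}(x) > \alpha$; the $\omega$-exposing property of $f_0$ together with a standard sequential argument then produces some $\alpha^{\ast} < 1$ such that $\hat{x} \in U$ whenever $x \in S(\B_{\X}, f_{0}, \alpha^{\ast})$. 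The triangle inequality bounds $\diam\, S(\B_{\X}, f_{0}, \alpha) \leq d + 2(1-\alpha) < 2$ for $\alpha$ close enough to $1$, and Lemma \ref{lemma1} yields $\eta(\X^{\ast}, f_{0}) < 2$. Consequently the norm of $\X^{\ast}$ is not extremely rough, and combined with the first step the cited characterization of $\mathcal{J}$ gives $\X \in \mathcal{J}$.
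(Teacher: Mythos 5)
Your proposal is correct and follows essentially the same route as the paper: locate a weakly exposed point in the small relatively weakly open set, use the exposing functional and a sequential argument to trap a slice's intersection with the sphere inside $U$, bound the slice's diameter by $\diam(U)+2(1-\alpha)$, and conclude via Lemma \ref{lemma1} and the Becerra--Rodr\'iguez characterization. The only difference is that you spell out the bipolar argument for convex $\omega^{\ast}$-transitivity of $\X^{\ast}$, which the paper dismisses as ``simple and well-known''; your version of that step is also correct.
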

Let us discuss the assumptions before giving the proof.
The assumption (b) above can be considered as a weakening of the Point of Continuity Property (PCP),
of the Radon-Nikodym property or of the Kadec-Klee property. Recall that a Banach space $\X$ is said to have the
Point of Continuity Property if each non-empty subset $A\subset\X$ contains a point $x$ such that for each
$\epsilon>0$ there is a non-empty relatively weakly open neighbourhood $U\subset A$ of $x$ such that $\diam(U)<\epsilon$.
It should be mentioned that PCP in turn is a much weaker property than asymptotical uniform convexity (see \cite{JLPS} for a discussion)
and that rotations have not been studied in connection with these two properties.

Regarding assumption (a), as pointed out in the Preliminaries, a point $x\in\S_{\X}$ is $\omega$-exposed
if it is exposed by a smooth functional $f\in\S_{\X^{\ast}}$. For comparison, recall that the analogous fact holds
for strongly exposed points and Fr\'{e}chet smooth functionals. Moreover, the requirement of a point $x$ being $\omega$-exposed
is much weaker than that of being strongly exposed. These considerations yield the following consequence of
Theorem 2.
\begin{corollary}
If a convex-transitive space $\X$ has the PCP and $\X^{\ast}$ is a Gateaux smooth space, then $\X\in \mathcal{J}$.
\end{corollary}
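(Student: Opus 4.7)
My plan is to apply Theorem~\ref{thm1}, reducing the proof to a direct verification of its two geometric hypotheses $(a)$ and $(b)$ from the standing assumptions of convex-transitivity, the PCP, and Gateaux smoothness of $\X^{\ast}$.

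The verification of $(b)$ will be immediate: I would simply apply the PCP to the nonempty subset $\S_{\X}\subset\X$ itself, which yields a point of $\S_{\X}$ admitting arbitrarily small relatively weakly open neighborhoods in $\S_{\X}$. This shows that the infimum in $(b)$ is in fact $0$, a fortiori strictly less than $2$.

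For $(a)$ I expect Gateaux smoothness of $\X^{\ast}$ to trivialize matters by forcing every point of $\S_{\X}$ to be $\omega$-exposed, so that $(a)$ holds in the strongest possible way. Given an arbitrary $p\in\S_{\X}$, I would invoke Hahn--Banach to pick a support functional $g\in\S_{\X^{\ast}}$ with $g(p)=1$; such a $g$ is norm-attaining, and by the Gateaux smoothness of $\X^{\ast}$ it is also a smooth point of $\X^{\ast}$. The characterization recalled in the Preliminaries then says that such a norm-attaining smooth $g$ is an $\omega$-exposing functional for a unique $x\in\S_{\X}$; since $g(x)=1=g(p)$, uniqueness forces $x=p$, exhibiting $p$ as an $\omega$-exposed point.

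The only step requiring any care is the invocation of the cited characterization of $\omega$-exposing functionals as the norm-attaining smooth points of $\X^{\ast}$; granted that, there is no genuine obstacle, and $(a)$ together with $(b)$ deliver $\X\in\mathcal{J}$ via Theorem~\ref{thm1}. It is perhaps worth noting that in this argument the PCP is used only for $(b)$, while the Gateaux smoothness of $\X^{\ast}$ is used only for $(a)$; convex-transitivity plays its role exclusively inside the invocation of Theorem~\ref{thm1}.
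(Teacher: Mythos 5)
Your proposal is correct and follows exactly the route the paper intends: the corollary is presented as an immediate consequence of Theorem~\ref{thm1}, with the PCP applied to $\S_{\X}$ giving hypothesis $(b)$ and the Gateaux smoothness of $\X^{\ast}$, via the cited characterization of $\omega$-exposing functionals as norm-attaining smooth points, making every point of $\S_{\X}$ $\omega$-exposed and hence giving $(a)$. The paper leaves this verification implicit in the discussion preceding the corollary, but your argument is precisely that verification.
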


\begin{proof}[Proof of Theorem 1.]
According to assumption (b) there exists a relatively weakly open set $U\subset \S_{\X}$ such that $\diam(U)<2$.
Assumption (a) yields that there exists a weakly exposed point $z\in U$.
If $f\in\S_{\X^{\ast}}$ is a weakly exposing functional for $z$, then there exists $\alpha\in (0,1)$ such that
$S(\B_{\X},f,\alpha)\cap \S_{\X}\subset U$. Indeed, assume to the contrary that
$S(\B_{\X},f,\alpha)\cap \S_{\X}\setminus U\neq\emptyset$
for all $0<\alpha<1$. Then one can pick a sequence $(v_{k})_{k\in\N}\subset\S_{\X}$ not intersecting $U$ such that
$f(v_{k})\rightarrow 1$ as $k\rightarrow\infty$. Since $f$ is a weakly exposing functional for $z$,
we get that $v_{k}\stackrel{\omega}{\longrightarrow}z$ as $k\rightarrow\infty$.
This provides a contradiction since $U$ is a $\omega$-open neighbourhood of $z$.

Let $\delta\in (\alpha,1)$ be such that $\diam(U)+2(1-\delta)<2$. Then we get that
\begin{equation}
\diam(S(\B_{\X},f,\delta))\leq \diam(S(\B_{\X},f,\delta)\cap \S_{\X})+2(1-\delta)<2,
\end{equation}
since $S(\B_{\X},f,\delta)\cap \S_{\X}\subset U$. Hence we obtain by Lemma \ref{lemma1} that $\X^{\ast}$ is not extremely rough.
Since $\X$ is convex-transitive, we obtain by a simple and well-known argument that $\X^{\ast}$ is
convex $\omega^{\ast}$-transitive. Hence $\X$ is a member of $\mathcal{J}$ according to \cite[Theorem 1]{BR0}.
\end{proof}

\begin{theorem}\label{thm2}
Let $\X$ be a convex-transitive Banach space. Suppose that for each $\epsilon>0$ there exist $k\in \N$,
$\delta_{1},\ldots,\delta_{k}\in (0,1)$, $c_{1},\ldots, c_{k}\in (0,1]$ with $\sum_{i} c_{i}=1$ 
and $f_{1},\ldots,f_{k}\in \S_{\X^{\ast}}$ such that the following conditions hold:
\begin{enumerate}
\item[(1)]{$\{y\in\B_{\X}|f_{i}(y)> 1-c_{i}\delta_{i}\ \mathrm{for}\ 1\leq i\leq k\}\neq \emptyset$,} 
\item[(2)]{$\diam(\{y\in\B_{\X}|f_{j}(y)> 1-2\delta_{i}\ \mathrm{for}\ 1\leq i\leq k\})<\epsilon$.}
\end{enumerate}
Then $\X$ belongs in $\mathcal{J}$.
\end{theorem}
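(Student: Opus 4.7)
The plan is to verify the two conditions given by the characterization of $\mathcal{J}$ recalled from \cite[Theorem 1]{BR0}, namely that $\X^{\ast}$ is convex $\omega^{\ast}$-transitive and that its norm is not extremely rough. The first condition follows immediately from the convex-transitivity of $\X$ by the same Hahn--Banach duality argument invoked at the end of the proof of Theorem \ref{thm1}, so the substance of the argument is to produce, via Lemma \ref{lemma1}, a single nonempty slice $S(\B_{\X},f,\alpha)$ with $f\in\S_{\X^{\ast}}$, $\alpha\in(0,1)$, and $\diam S(\B_{\X},f,\alpha)<2$.

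To construct such a slice, I would fix $\epsilon<2$ (taking $\epsilon=1$ suffices) and invoke the hypothesis to obtain the data $k$, $c_{1},\dots,c_{k}$, $\delta_{1},\dots,\delta_{k}$, $f_{1},\dots,f_{k}\in\S_{\X^{\ast}}$, together with a witness $y_{0}\in\B_{\X}$ from condition~(1). The clean choice of convex combination is not $\sum c_{i}f_{i}$, but rather the one that equalizes the products $\lambda_{i}\delta_{i}$: set $m=\bigl(\sum_{j}\delta_{j}^{-1}\bigr)^{-1}$ and $\lambda_{i}=m/\delta_{i}$, so that $\sum_{i}\lambda_{i}=1$ and $\lambda_{i}\delta_{i}=m$ for each $i$. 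Put $g=\sum_{i}\lambda_{i}f_{i}\in\X^{\ast}$. Then from $f_{i}(y_{0})>1-c_{i}\delta_{i}$ and $\sum_{i}c_{i}=1$ one computes
\[
g(y_{0})>\sum_{i}\lambda_{i}(1-c_{i}\delta_{i})=1-m\sum_{i}c_{i}=1-m,
\]
while for any $y\in\B_{\X}$ with $g(y)>1-2m$ the nonnegative terms $\lambda_{i}(1-f_{i}(y))$ sum to less than $2m$, forcing $1-f_{i}(y)<2\delta_{i}$ for each $i$ and hence placing $y$ inside the small-diameter set of condition~(2). Thus the slice $\{y\in\B_{\X}:g(y)>1-2m\}$ is nonempty (it contains $y_{0}$, since $1-m>1-2m$) and has diameter less than $\epsilon$; after rescaling by $\|g\|$, this becomes a slice $S(\B_{\X},g/\|g\|,(1-2m)/\|g\|)$ whose parameter lies in $(0,1)$, which feeds into Lemma \ref{lemma1} to give $\eta(\X^{\ast},g/\|g\|)<2$.

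I expect the only nonroutine step to be identifying the correct convex combination. The natural guess $\sum c_{i}f_{i}$ leads to a lower bound $g(y_{0})>1-\sum c_{i}^{2}\delta_{i}$ and a diameter cutoff $1-2\min_{j}c_{j}\delta_{j}$, which may fall on the wrong side of each other when the $c_{j}\delta_{j}$ are unbalanced; the reweighting $\lambda_{i}\propto1/\delta_{i}$ collapses both quantities to $1-m$ and $1-2m$, which always have the right order. A small technical point is that the normalized cutoff $(1-2m)/\|g\|$ must lie in $(0,1)$: the upper bound follows from $\|g\|\geq g(y_{0})>1-m>1-2m$, while the positivity $m<1/2$ is automatic in any space of dimension at least two, for otherwise the half-ball $\{y\in\B_{\X}:g(y)>0\}$ would sit inside the set from~(2) and hence have diameter less than $\epsilon<2$, contradicting a standard computation.
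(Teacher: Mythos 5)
Your proof is correct, but it takes a genuinely different and more elementary route than the paper's. You collapse the finite intersection of slices in condition (2) into a single slice by forming $g=\sum_i\lambda_i f_i$ with $\lambda_i$ proportional to $1/\delta_i$: the identity $\lambda_i\delta_i=m$ together with $\sum_i c_i=1$ puts the witness $y_0$ of (1) above the cutoff $1-2m$, while the nonnegativity of the terms $\lambda_i(1-f_i(y))$ forces every point of $\{y\in\B_{\X}:g(y)>1-2m\}$ into the small-diameter set of (2); this hands a single nonempty slice of $\B_{\X}$ of diameter less than $\epsilon$ directly to Lemma~\ref{lemma1}, and convex-transitivity enters only through the standard deduction that $\X^{\ast}$ is convex $\omega^{\ast}$-transitive. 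The paper instead uses convex-transitivity at the heart of the argument: it approximates the witness $w$ of (1) by convex combinations of an orbit $\{x_n\}\subset\mathcal{G}(x)$, runs a pigeonhole argument on the $\ell^{1}$-coefficients (this is where $\sum_i c_i=1$ is exploited there) to produce one rotation $T$ with $f_i(T(x))>1-\delta_i$ for all $i$, deduces that every $x\in\S_{\X}$ is an extreme point of $\B_{\X^{\ast\ast}}$, and only then converts the finite intersection of slices into a single slice via Choquet's lemma. Your route is shorter and makes visible that hypotheses (1)--(2) alone, without any transitivity, already prevent $\X^{\ast}$ from being extremely rough; the paper's route yields the additional structural byproduct that every unit vector is extreme in the bidual ball. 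Your technical side remarks are sound: the normalized cutoff lies in $(0,1)$ because $\|g\|\geq g(y_0)>1-m>1-2m$, and the case $1-2m\leq 0$ is either excluded by your half-ball diameter argument or, even more simply, avoided by raising the cutoff to any $\beta\in(0,1-m)$, since $\{y\in\B_{\X}:g(y)>\beta\}$ is then still nonempty and contained in the set of (2).
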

\begin{proof}
We consider $\X$ embedded canonically in $\X^{\ast\ast}$.
Fix $0<\epsilon<1$, $x\in \S_{\X}$ and $u^{\ast\ast},v^{\ast\ast}\in \S_{\X^{\ast\ast}}$ such that 
$\frac{u^{\ast\ast}+v^{\ast\ast}}{2}=x$.

As in the assumptions, let $(\delta_{i})$, $(c_{i})$ and $(f_{i})$ be $k$-uples such that
\begin{equation}
\{y\in \B_{\X}:\ f_{i}(y)> 1-c_{i}\delta_{i}\ \mathrm{for}\ 1\leq i\leq k\}\neq\emptyset
\end{equation}
and 
\begin{equation}
V\stackrel{\cdot}{=}\{y\in \B_{\X}:\ f_{i}(y)> 1-2\delta_{i}\ \mathrm{for}\ 1\leq i\leq k\}\subset \B_{\X}
\end{equation}
satisfies $\diam(V)<\epsilon$. 
Define $W=\{y\in \B_{\X^{\ast\ast}}:\ f_{i}(y)> 1-2\delta_{i}\ \mathrm{for}\ 1\leq i\leq k\}$.

Recall that $\B_{\X}$ is $\omega^{\ast}$-dense in $\B_{\X^{\ast\ast}}$ according to Goldstine's theorem.
Since $W$ is relatively $\omega^{\ast}$-open in $\B_{\X^{\ast\ast}}$, we obtain that $V$ is $\omega^{\ast}$-dense in $W$.
Observe that 
$\{y-z:\ y,z\in V\}$ is $\omega^{\ast}$-dense in
$\{u^{\ast\ast}-v^{\ast\ast}:\ u^{\ast\ast},v^{\ast\ast}\in W\}$. Thus, by using the
$\omega^{\ast}$-lower semicontinuity of the norm of $\X^{\ast\ast}$ we obtain that
$\diam(W)=\diam(V)<\epsilon$.

Pick $w\in \B_{\X}$ such that $f_{i}(w)> 1-c_{i}\delta_{i}$ for $1\leq i\leq k$.
Observe that $w\in\overline{\conv}(\mathcal{G}(x))$ as $\X$ is convex-transitive.
Let $\{x_{n}|n\in\N\}\subset\mathcal{G}(x)$ be such that $w\in \overline{\conv}(\{x_{n}|n\in\N\})$.
Fix a sequence $(a^{(j)})_{j\in \N}\subset \S_{\ell^{1}}$ of coordinate-wise non-negative vectors such that 
\[\left|\left|\sum_{n\in \N}a^{(j)}_{n}x_{n}-w\right|\right|\rightarrow 0\ \mathrm{as}\ j\rightarrow \infty\]
and the continuity of $f_{i}$ for $1\leq i\leq k$ yields that 
\[f_{i}\left(\sum_{n\in\N} a^{(j)}_{n}x_{n}\right)\rightarrow f_{i}(w)\ \mathrm{as}\ j\rightarrow \infty\ \mathrm{for}\ 1\leq i\leq k.\]
By the selection of $w$ we obtain that  
\begin{equation}\label{eq: Kij}
\limsup_{j\rightarrow \infty}\sum_{n\in \N\setminus K_{i}} a^{(j)}_{n}< c_{i},\quad 1\leq i\leq k,
\end{equation}
where $K_{i}=\{n\in \N|\ f_{i}(x_{n})>1-\delta_{i}\}$.
Indeed, since $||f_{i}||=1$ for $1\leq i\leq k$, we get
\[\sum_{n\in\N} a^{(j)}_{n}f_{i}(x_{n})\leq \sum_{n\in K_{i}}a_{n}^{(j)}+(1-\delta_{i})\sum_{n\in\N\setminus K_{i}}a_{n}^{(j)}\]
for $1\leq i\leq k,\ j\in \N$. Note that $1-t+(1-\delta_{i})t=1-t\delta_{i}$ for $t\in \R$ and $f_{i}(w)>1-c_{i}\delta_{i}$, 
so that placing $t=\sum_{n\in\N\setminus K_{i}}a_{n}^{(j)}$ yields \eqref{eq: Kij}.

Hence we deduce from \eqref{eq: Kij} and the fact that $\sum_{i} c_{i}=1$ that 
\[\liminf_{j\rightarrow \infty}\sum_{n\in K}a_{n}^{(j)}\geq 1-\sum_{i}\ \limsup_{j\rightarrow \infty}\sum_{n\in\N\setminus K_{i}}a_{n}^{(j)}>0,\]
where $K=\bigcap_{1\leq i\leq k}K_{i}$. Consequently $K\neq \emptyset$.
In particular, there exists $T\in \mathcal{G}_{\X}$ satisfying $f_{i}(T(x))> 1-\delta_{i}$ for $1\leq i\leq k$.
Note that $T^{\ast\ast}(x)=\frac{T^{\ast\ast}(u^{\ast\ast})+T^{\ast\ast}(v^{\ast\ast})}{2}$.
Since $1-\delta_{i}<T^{\ast\ast}(x)(f_{i})$ and $\max(T^{\ast\ast}(u^{\ast\ast})(f_{i}),T^{\ast\ast}(v^{\ast\ast})(f_{i}))\leq 1$, 
it follows that $\min(T^{\ast\ast}(u^{\ast\ast})(f_{i}),T^{\ast\ast}(v^{\ast\ast})(f_{i}))>1-2\delta_{i}$ 
for $1\leq i\leq k$. Thus $T^{\ast\ast}(u^{\ast\ast}),T^{\ast\ast}(v^{\ast\ast})\in W$. Hence
\[||u^{\ast\ast}-v^{\ast\ast}||=||T^{\ast\ast}(u^{\ast\ast})-T^{\ast\ast}(v^{\ast\ast})||<\epsilon.\]
Since $u^{\ast\ast},v^{\ast\ast}$ and $\epsilon$ were arbitrary, we get that $x$ is an extreme point of $\B_{\X^{\ast\ast}}$.

Then $T^{\ast\ast}(x)$ is also an extreme point and according to Choquet's Lemma (see e.g. \cite[Lemma 3.40]{FHHMPZ})
the $\omega^{\ast}$-slices of $\B_{\X^{\ast\ast}}$ containing $T^{\ast\ast}(x)$ form a $\omega^{\ast}$-neighbourhood basis
of $T^{\ast\ast}(x)$ in $\B_{\X^{\ast\ast}}$.
Thus there exists $\alpha\in (0,1)$ and $g\in \S_{\X^{\ast}}$ such that $S(\B_{\X^{\ast\ast}},g,\alpha)\subset W$.
In particular $\diam(S(\B_{\X},g,\alpha))<\epsilon<2$. By Lemma \ref{lemma1} we get that $\X^{\ast}$ is not extremely rough.
Since $\X$ is convex-transitive we have that $\X^{\ast}$ is convex $\omega^{\ast}$-transitive.
We conclude by applying \cite[Theorem 1]{BR0} that $\X$ is a member of $\mathcal{J}$.
\end{proof}

We note that assumption (1) in the above result can be replaced by the following stronger condition, which is perhaps
more appealing; namely that there exists $x\in \S_{\X}$ such that 
\[f_{1}(x)=f_{2}(x)=\dots =f_{k}(x)=1.\]

\end{document}